\numberwithin{equation}{section}
\def\red{\textcolor{red}}
\theoremstyle{plain}
\newtheorem{theorem}{Theorem}[section]
\newtheorem{lemma}[theorem]{Lemma}
\newtheorem{proposition}[theorem]{Proposition}
\theoremstyle{definition}
\newtheorem{example}[theorem]{Example}
\newtheorem{conjecture}[theorem]{Conjecture}
\newtheorem{?}[theorem]{Problem}
\newcommand{\N}{\mathbb{N}}
\def\S{\mathfrak{S}}
\def\A{\mathcal{A}}
\def\PA{\mathcal{PA}}
\def\last{\mathrm{last}}
\def\zero{\mathrm{zero}}
\def\len{\mathrm{len}}
\def\asc{\mathrm{asc}}
\def\wt{\operatorname{wt}}
\def\I{\operatorname{{\bf I}}}
\def\rmin{\operatorname{rmin}}
\def\lmin{\operatorname{lmin}}
\def\rmax{\operatorname{rmax}}
\def\lmax{\operatorname{lmax}}
\def\maxi{\operatorname{max}}
\def\rep{\operatorname{rep}}
\def\tt{\mathbf{t}}
\def\boxit#1{\leavevmode\hbox{\vrule\vtop{\vbox{\kern.33333pt\hrule\kern1pt\hbox{\kern1pt\vbox{#1}\kern1pt}}\kern1pt\hrule}\vrule}}
\begin{document}

\title[Patterns in inversion sequences]{On $\underline{12}0$-avoiding inversion and ascent sequences}

\author[Z. Lin]{Zhicong Lin}
\address[Zhicong Lin]{Research Center for Mathematics and Interdisciplinary Sciences, Shandong University, Qingdao 266237, P.R. China}
\email{linz@sdu.edu.cn}

\author[S. Fu]{Shishuo Fu}
\address[Shishuo Fu]{College of Mathematics and Statistics, Chongqing University, Huxi campus, Chongqing 401331, P.R. China}
\email{fsshuo@cqu.edu.cn}


\date{\today}

\begin{abstract}
Recently, Yan and the first named author investigated systematically the enumeration of inversion or ascent sequences avoiding vincular patterns of length $3$, where two of the three letters are required to be adjacent. They established many connections with familiar combinatorial families and proposed several interesting conjectures. The objective of this paper is to address two of their conjectures concerning the enumeration of $\underline{12}0$-avoiding inversion or ascent sequences.
\end{abstract}

\keywords{Vincular patterns; Inversion/Ascent sequences; triangular binomial coefficients; Powered Catalan numbers}

\maketitle
\section{Introduction}
Since the publications of Duncan and Steingr\'imsson~\cite{ds}, Corteel, Martinez, Savage and Weselcouch~\cite{cor} and Mansour and Shattuck~\cite{mash}, there has been increasing interest in counting pattern avoiding ascent/inversion sequences~\cite{auli,auli2,bp,bbgr,bgrr,cjl,cddds,kl,kl2,lin,lin2,ly,ms,yan0,yan}. In particular, motivated by the study of generalized patterns in permutations~\cite{bas,cla}, Yan and the first named author~\cite{ly} carried out the systematic study of ascent/inversion sequences avoiding vincular patterns of length $3$. They reported many nice connections with familiar combinatorial families and posed several challenging enumeration conjectures. The objective of this paper is to address two of their conjectures concerning the pattern $\underline{12}0$ in  ascent/inversion sequences. It turns out that  $\underline{12}0$-avoiding ascent and inversion sequences possess attractive enumeration results albeit having elusive structure.

Before stating our results, we need to review some definitions on inversion sequences. An integer sequence $e=e_1e_2\ldots e_n$ of length $n$ is an {\em inversion sequence} if $0\leq e_i<i$ for all $1\leq i\leq n$. Inversion sequences of length $n$, denoted $\I_n$, are in natural bijection with permutations $\S_n$ of $[n]:=\{1,2,\ldots,n\}$ via the famous {\em Lehmer code} (see~\cite{cor,kl}). The set $\A_n$ of {\em ascent sequences} of length $n$ consists of $e\in\I_n$ such that 
$$
e_{i+1}\leq \asc(e_1e_2\ldots e_{i})+1
$$
for all $1\leq i<n$, where $\asc(e_1e_2\ldots e_i):=|\{\ell\in[i-1]:e_{\ell}<e_{\ell+1}\}|$ is the number of {\em ascents} of $e_1e_2\ldots e_i$.
As one of the most important subsets of inversion sequences, ascent sequences were introduced by Bousquet-M\'elou, Claesson, Dukes and Kitaev~\cite{bcdk} to encode the $(2$+$2)$-free posets. Many remarkable connections between inversion (resp.~ascent) sequences and permutations (resp.~restricted permutations) with unexpected applications have been found in the literature; see~\cite{sav,kl,fjlyz} and the references therein. 

Permutations and inversion sequences can both be viewed as words on $\N$. A word $w=w_1w_2\cdots w_n$ contains  a pattern $p=p_1p_2\cdots p_k$ if there exists $i_1<i_2<\cdots<i_k$ such that the subword $w_{i_1}w_{i_2}\cdots w_{i_k}$ of $w$ is order isomorphic  to $p$. In addition, if some consecutive letters in a pattern $p$ are underlined, then we further require that in any occurrence of $p$, the letters corresponding to these underlined letters be adjacent in $w$. Such generalized patterns are known as {\em vincular patterns} (cf.~Kitaev's book~\cite[pp.2]{ki}), which were introduced in the classification of Mahonian statistics by Babson and Steingr\'imsson~\cite{bs}. If a word $w$ does not contain an occurrence of  a vincular pattern $p$, then $w$ is said to {\em avoid} the pattern $p$. For example, an inversion sequence $e\in\I_n$ is  $\underline{12}0$-avoiding if there does not exist indices $i$ and $j$, $2\leq i<j\leq n$, such that $e_j<e_{i-1}<e_i$.
For a set $W$ of words, the set of $p$-avoiding words in $W$ is denoted by $W(p)$.

For $n\geq1$ and $0\leq k\leq n-1$, let 
$$
T(n,k):={{k+2\choose2}+n-k-2\choose n-k-1}.
$$
The triangle $\{T(n,k)\}_{0\leq k\leq n-1}^{n\geq1}$ is known as the triangle of {\em triangular binomial coefficients} and appears as A098568 in the OEIS~\cite{oeis}. Lin and Yan~\cite{ly} proved that $T(n,k)$ enumerates ascent sequences $e\in\A_n(1\underline{01})$ with $\asc(e)=k$ and 
 conjectured the following different interpretation.

\begin{conjecture}[Lin and Yan~\text{\cite[Conj.~3.4]{ly}}]\label{asc:12-0}
For $n\geq1$ and $0\leq k\leq n-1$, we have
$$
|\{e\in\A_n(\underline{12}0):\asc(e)=k\}|=T(n,k).
$$
\end{conjecture}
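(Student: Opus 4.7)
The strategy is to establish the generating function identity $F_k(x) := \sum_{e\in\A(\underline{12}0),\,\asc(e)=k} x^{|e|} = x^{k+1}/(1-x)^{\binom{k+2}{2}}$, whose $[x^n]$ coefficient is precisely $T(n,k)$. The starting point is a canonical block decomposition: for $e\in\A(\underline{12}0)$ with $\asc(e)=k$, $e$ splits uniquely as an initial all-zero block of length $\geq 1$ followed by $k$ weakly decreasing ``ascent blocks'' $B^{(1)},\ldots,B^{(k)}$, where $B^{(j)}$ starts at the top $t_j$ of the $j$-th ascent and runs until just before the next ascent (or to the end when $j=k$). A short check shows that $\underline{12}0$-avoidance amounts to the ascent bottoms $b_1\le b_2\le\cdots\le b_k$ being weakly increasing and every entry of $B^{(j)}$ being at least $b_j$. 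Setting $v_j$ to be the last entry of $B^{(j)}$ (with $v_0:=0$), one has $b_{j+1}=v_j$, so the data reduces to a tuple $0=v_0\le v_1\le\cdots\le v_k$ with $v_j\le j$ and tops $t_j\in[\max(v_{j-1}+1,v_j),j]$. For fixed $(v_{j-1},v_j)$, summing the block-$j$ length generating function over $t_j$ and over the internal weakly decreasing run telescopes to
\[
G_j(v_{j-1},v_j;x)\;=\;\frac{x}{(1-x)^{j-v_j+1}}\;-\;[v_j=v_{j-1}]\cdot\frac{x}{1-x}.
\]

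Let $H_k(x,v)$ denote the refinement of $F_k(x)$ that also records the final entry $v=v_k$. Multiplying the $\frac{x}{1-x}$ contribution from the initial zero block by $\prod_j G_j$ and peeling off the last factor gives the recursion
\[
H_k(x,v)\;=\;\frac{x}{(1-x)^{k-v+1}}\sum_{v'=0}^{v} H_{k-1}(x,v')\;-\;\frac{x}{1-x}\,H_{k-1}(x,v),\qquad 0\le v\le k,
\]
with $H_0(x,0)=\tfrac{x}{1-x}$ and the convention $H_{k-1}(x,k)\equiv 0$. Computing a handful of small cases suggests the closed form
\[
H_k(x,v)\;=\;\frac{x^{k+1}\prod_{j=v+1}^{k}\bigl(1-(1-x)^j\bigr)}{(1-x)^{\binom{k+1}{2}+k-v+1}}\qquad(0\le v\le k),
\]
which I would prove by induction on $k$; after pulling out the visible product, the induction step becomes a polynomial identity in $u:=1-x$ that is straightforward from the inductive hypothesis. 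Guessing and verifying this closed form is the main technical obstacle of the proof.

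Once the formula for $H_k(x,v)$ is in hand, setting $u=1-x$ and summing over $v$ gives
\[
F_k(x)\;=\;\frac{x^{k+1}}{(1-x)^{\binom{k+1}{2}+k+1}}\cdot S_k(u),\qquad S_k(u):=\sum_{v=0}^{k} u^v\prod_{j=v+1}^{k}(1-u^j).
\]
A one-line induction from $S_k(u)=(1-u^k)S_{k-1}(u)+u^k$ with $S_0(u)=1$ shows $S_k(u)\equiv 1$. Combined with $\binom{k+1}{2}+k+1=\binom{k+2}{2}$, this collapses $F_k(x)$ to $\frac{x^{k+1}}{(1-x)^{\binom{k+2}{2}}}$, and extracting $[x^n]$ yields $\binom{\binom{k+2}{2}+n-k-2}{n-k-1}=T(n,k)$, as claimed.
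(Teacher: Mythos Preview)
Your proof is correct and follows essentially the same approach as the paper: refine by the last entry, derive a recursion from the run/block decomposition, guess and prove a product-form closed formula by induction, and then sum over the last entry via a telescoping identity. The only cosmetic difference is that the paper first passes to primitive ascent sequences (equivalently, substitutes $x\mapsto x/(1-x)$ in your framework), which replaces your factors $1-(1-x)^j$ and powers of $1/(1-x)$ by the tidier $(1+x)^j-1$ and $(1+x)^{\binom{i}{2}}$; your $H_k(x,v)$ is exactly $\bigl(\tfrac{x}{1-x}\bigr)^{k+1} f_{k,v}\bigl(\tfrac{x}{1-x}\bigr)$ in the paper's notation.
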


Following Yan~\cite{yan0}, an ascent sequence $e\in\A_n$ is said to be {\em primitive} if $e_i\neq e_{i+1}$ for all $i\in[n-1]$. Let $\PA_n$ be the set of primitive ascent sequences of length $n$. It was observed in~\cite{ly} that Conjecture~\ref{asc:12-0} is equivalent to 
\begin{equation}\label{PA:12-0}
|\{e\in\PA_n(\underline{12}0):\asc(e)=k\}|=\binom{\binom{k+1}{2}}{n-k-1}.
\end{equation}
Alternatively, it suffices to establish the generating function formula
\begin{equation}\label{aztec}
\sum_{e}x^{\len(e)-k-1}=(1+x)^{{k+1\choose2}}, 
\end{equation}
where the sum runs over all $\underline{12}0$-avoiding primitive inversion sequences with $k$ ascents and $\len(e)$ is the length of $e$. Interestingly,  the right-hand side of~\eqref{aztec} is also the generating function for domino tilings of Azetec diamond of order $k$ by the number of  horizontal dominoes, a celebrated  result of Elkies, Kuperberg, Larsen and Propp~\cite{eklp}.

Another conjecture in~\cite{ly} concerns an interpretation for the refined powered Catalan numbers in terms of $\underline{12}0$-avoiding inversion sequences.
The integer sequence of {\em powered Catalan numbers} $\{p_n\}_{n\geq1}$ is registered on~\cite{oeis} as A113227, whose first few terms are
$$
1,2,6,23,105,549,3207,20577,143239,\ldots.
$$
It is known that the pattern avoiding classes
$\S_n(1\underline{23}4)$, $\S_n(1\underline{32}4)$, $\S_n(1\underline{34}2)$, $\S_n(1\underline{43}2)$, $\I_n(101)$ and $\I_n(110)$ are all counted by the powered  Catalan number $p_n$ (see~\cite{bbgr,cor,ms} and the references cited therein).
The number $p_n$ has a natural refinement by $p_n=\sum_{k=1}^n c_{n,k}$, where $c_{n,k}$ are defined recursively by
\begin{align}\label{recurrence}
\begin{cases}
c_{1,1}=1\text{ and }c_{n,0}=0, \text{ for } n\ge 1,\\
c_{n,k}=c_{n-1,k-1}+k\sum_{j=k}^{n-1}c_{n-1,j}, \text{ for $n\geq2$ and $1\leq k\leq n$}.
\end{cases}
\end{align}
Corteel, Martinez,  Savage and  Weselcouch~\cite{cor} proved that the cardinality of $\I_n(101)$ or $\I_n(110)$ is $p_n$ by showing
\begin{align}\label{101=110 with zero}
|\{e\in\I_n(101):\zero(e)=k\}|=|\{e\in\I_n(110):\zero(e)=k\}|=c_{n,k},
\end{align}
where $\zero(e)$ is the number of zero entries of $e$. Lin and Yan~\cite{ly} showed that $\I_n(\underline{12}0)$ has cardinality $p_n$ by establishing a bijection between $\S_n(3\underline{21}4)$ and $\I_n(\underline{12}0)$ but were unable to prove the following refinement.

\begin{conjecture}[\text{Lin and Yan~\cite[Conj.~2.20]{ly}}]\label{inv:12-0}
For $n\ge 1$ and $1\leq k\leq n$, we have
$$
|\{e\in\I_n(\underline{12}0):\zero(e)=k\}|=c_{n,k}.
$$
\end{conjecture}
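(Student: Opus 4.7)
The plan is to prove Conjecture~\ref{inv:12-0} by showing that $a_{n,k} := |\{e \in \I_n(\underline{12}0) : \zero(e) = k\}|$ satisfies the same recurrence~\eqref{recurrence} as $c_{n,k}$. Since $a_{1,1}=1$ and $a_{n,0}=0$ are immediate, this reduces the conjecture to establishing, for $n\ge 2$ and $1\le k\le n$,
\begin{equation*}
a_{n,k} \;=\; a_{n-1,k-1} \;+\; k\sum_{j=k}^{n-1} a_{n-1,j}.
\end{equation*}

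The central structural fact I would exploit is a \emph{rising floor} property of $\underline{12}0$-avoiding inversion sequences: whenever $e_{i-1}<e_i$, every subsequent entry $e_j$ ($j>i$) must satisfy $e_j \ge e_{i-1}$. Writing $L(i)$ for the maximum ascent bottom $e_{j-1}$ over ascents $(j-1,j)$ with $j<i$ (with $L(1)=L(2):=0$), the $\underline{12}0$-avoidance condition is equivalent to $e_i \ge L(i)$ for all $i$; since $L$ is weakly increasing, a zero entry can appear only while $L(i)=0$. This rigidity will be the engine of the bijection.

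The strategy is to construct an explicit bijection $\Phi$ that decomposes $\{e\in\I_n(\underline{12}0):\zero(e)=k\}$ into one piece in bijection with $\{e'\in\I_{n-1}(\underline{12}0):\zero(e')=k-1\}$, together with, for each $j\ge k$, a piece in bijection with $[k]\times\{e'\in\I_{n-1}(\underline{12}0):\zero(e')=j\}$. The first piece will be produced by a canonical zero-insertion: given $e'$ with $\zero(e')=k-1$, append $0$ if every ascent of $e'$ has bottom $0$, and otherwise insert $0$ immediately before the first ascent of $e'$ whose bottom is positive; the verification that the resulting word is $\underline{12}0$-avoiding with $k$ zeros follows directly from the rising-floor property. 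For the second piece, given an $e$ outside the image above, I would remove a distinguished non-zero entry (located by the interplay between the rightmost non-zero values of $e$ and the floor sequence $L$) and encode the label $i\in[k]$ as the position of a canonically specified zero of $e$ among its $k$ zeros.

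The main obstacle will be the exact definition of the deletion step for the second piece and the verification that the map is a bijection: one must ensure that the deleted position is never a zero, that the shortened sequence remains $\underline{12}0$-avoiding, and that every label-image pair occurs exactly once for the correct $j\ge k$. A tempting shortcut would be a $\zero$-preserving bijection $\I_n(\underline{12}0) \leftrightarrow \I_n(101)$, which in view of~\eqref{101=110 with zero} would immediately yield the conjecture; however, the two sets differ already at $n=4$ (for instance $0101\in\I_4(\underline{12}0)\setminus\I_4(101)$ while $0120\in\I_4(101)\setminus\I_4(\underline{12}0)$), so even this alternative route requires a nontrivial local surgery that preserves the zero statistic.
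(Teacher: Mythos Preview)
Your high-level plan coincides with the paper's: both verify the recurrence~\eqref{recurrence} by building the elements of $\I_n(\underline{12}0)$ with $k$ zeros out of those of length $n-1$ with $k-1$ zeros (one child) together with $k$ labelled copies for each $j\ge k$. Your ``rising floor'' observation is exactly the structural fact the paper exploits when it writes $e=0W_10W_2\cdots 0W_k$ with $W_1,\ldots,W_{k-1}$ non-increasing.

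The genuine gap is the second piece. You say you would ``remove a distinguished non-zero entry'' determined by the floor sequence and record a label in $[k]$, but you never specify which entry, nor do you verify closure under $\underline{12}0$-avoidance, surjectivity, or that the fibres have the right sizes. This is precisely where the difficulty lies: the paper's construction for the $(j)$-branch is \emph{not} a single deletion at all. Starting from $e'=0\cdot\sigma_1(e)$, one replaces certain zeros by $1$'s and then runs an iterated local rewriting (``Algorithm~BS'') that repeatedly shifts a block $R$ backwards while applying $\sigma_{-1}$, with a nontrivial termination rule; the inverse (``Algorithm~FS'') is equally delicate. The need for such an algorithm strongly suggests that a one-entry deletion, no matter how cleverly located by $L(i)$, will not suffice, and your proposal gives no hint of the multi-step surgery that is actually required.

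A secondary point: your first-piece map (insert a literal $0$ before the first positive-bottom ascent, or append $0$) differs from the paper's $e\mapsto 0\cdot\sigma_1(e)$, which prepends $0$ and increments every nonzero entry. Both are injections into the correct target, but they have different images, hence different complements. So even if you later consult the paper's Algorithm~BS, it will not fit your decomposition without modification; you would need to design a matching backward/forward shift from scratch. In short, the missing idea is the explicit, invertible rewriting that realises the $k\cdot\sum_{j\ge k}a_{n-1,j}$ term, and nothing in the proposal indicates how to produce it.
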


In this paper, we confirm the above two conjectures. 

The rest of this paper is organized as follows. In Section~\ref{sec:2}, we prove Conjecture~\ref{asc:12-0} by considering the last entries of $\underline{12}0$-avoiding ascent sequences. In Section~\ref{sec:3}, we prove Conjecture~\ref{inv:12-0} via a well-designed algorithm for constructing  $\underline{12}0$-avoiding inversion sequences. We will also consider the last entry statistic of $\underline{12}0$-avoiding inversion sequences, which leads to a new succession rule for the powered Catalan numbers.  Finally, we end this paper with two tempting  equidistribution conjectures concerning the open problem to enumerate  $\underline{23}14$-avoiding permutations.

\section{On $\underline{12}0$-avoiding ascent sequences}
\label{sec:2}
This section is devoted to the proof of Conjecture~\ref{asc:12-0}. We begin with  a different characterization of $\PA_n(\underline{12}0)$, which is more convenient for our enumerative purpose.

For a given $e\in\PA_n$, since it is primitive, each consecutive pair $(e_i,e_{i+1})$ forms either a descent, or an ascent. Now we can uniquely partition $e$ with ``$\slash$'', into maximal decreasing subsequences called {\em runs}. Let $\tt(e)$ be the subsequence formed by the least entry in each run of $e$, and we call it the {\em tail sequence} of $e$. For example, $$e=0102324325=0\slash10\slash2\slash32\slash432\slash5\quad\text{and}\quad\tt(e)=002225.$$
We have the following characterization of $\PA_n(\underline{12}0)$ using tail sequences.

\begin{lemma}\label{char:12-0}
For  any $e\in\PA_n$ with $\asc(e)=k$, we have
$$e\in\PA_n(\underline{12}0) \text{ if and only if $\tt(e)\in\I_{k+1}$ is non-decreasing.} $$
\end{lemma}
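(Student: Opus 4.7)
The plan is to translate both $\underline{12}0$-avoidance and the monotonicity of $\tt(e)$ into the common language of the run decomposition of $e$.

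First, since $e \in \PA_n$ is primitive, every adjacent pair $(e_j, e_{j+1})$ is either an ascent or a descent, hence $e$ partitions uniquely into $k+1$ maximal decreasing runs where $k = \asc(e)$, and $\tt(e)$ records the last (equivalently, smallest) entry of each run, so that $\tt(e)$ has length $k+1$. Let $i_s$ denote the starting position of the $s$-th run (with $i_1 = 1$), so that $\tt(e)_s = e_{i_{s+1}-1}$ for $1 \le s \le k$ and $\tt(e)_{k+1} = e_n$, and each ascent of $e$ occurs precisely at a position of the form $i_s - 1$ for some $s \in \{2, \dots, k+1\}$.

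Second, I would verify the auxiliary fact $\tt(e) \in \I_{k+1}$. The first $s-1$ runs together contribute exactly $s-2$ ascents within the prefix $e_1 \cdots e_{i_s - 1}$, so the ascent sequence condition yields $e_{i_s} \le (s-2)+1 = s-1$; since run $s$ is decreasing, $\tt(e)_s \le e_{i_s} \le s-1$, which is exactly the inversion sequence condition.

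The heart of the proof is the biconditional, which I would split into two directions using the run framework above. If $\tt(e)$ is non-decreasing, then for any ascent $(e_{i-1}, e_i) = (\tt(e)_r, e_{i_{r+1}})$, any entry $e_j$ with $j > i$ lies in some run indexed by $s \ge r+1$, so $e_j \ge \tt(e)_s \ge \tt(e)_r = e_{i-1}$ by monotonicity, forbidding any $\underline{12}0$-occurrence. Conversely, if $\tt(e)_r > \tt(e)_s$ for some $r < s$, I would exhibit a $\underline{12}0$-occurrence at positions $(i_{r+1}-1,\, i_{r+1},\, j)$, where $j$ attains $\tt(e)_s$ within run $s$. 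The main delicate point is the case $s = r+1$: the pattern requires $j > i_{r+1}$ strictly, which forces run $r+1$ to contain at least two entries. This is automatic, since a singleton run $r+1$ would have $\tt(e)_{r+1} = e_{i_{r+1}}$, and the ascent at position $i_{r+1}-1$ makes this value strictly greater than $\tt(e)_r$, contradicting the assumption $\tt(e)_{r+1} < \tt(e)_r$; hence $j \ge i_{r+1}+1$ and the $\underline{12}0$-occurrence is indeed realized.
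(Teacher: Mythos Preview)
Your proof is correct and follows essentially the same route as the paper's: both arguments identify that an ascent of $e$ sits at the end of a run, so that a $\underline{12}0$-occurrence forces a descent in $\tt(e)$, and conversely a descent $\tt(e)_r>\tt(e)_{s}$ yields the occurrence $(i_{r+1}-1,\,i_{r+1},\,j)$ with $e_j=\tt(e)_s$. You are simply more explicit than the paper about the verification that $\tt(e)\in\I_{k+1}$ and about the singleton-run edge case when $s=r+1$ (the paper tacitly avoids this by taking a \emph{consecutive} descent $t_i>t_{i+1}$, which likewise rules out a singleton $(i{+}1)$-th run).
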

\begin{proof}
Clearly, $\tt(e)\in\I_{k+1}$ is a consequence  of $\asc(e)=k$ and the definitions of primitive ascent sequences and tail sequences. Now we show that $e$ contains a $\underline{12}0$ pattern if and only if $\tt(e)=t_1t_2\ldots t_{k+1}$ contains a descent.

Suppose the triple $e_ie_{i+1}e_j$ forms a $\underline{12}0$ pattern in $e$, then $e_i$ must be the tail of a run. Suppose $e_l$ is the tail of the run that contains $e_j$, for some $l\ge j$. We see $e_i>e_j\ge e_l$, hence $\tt(e)$, containing $e_i$ and $e_l$, must have a descent. Conversely, suppose $t_i>t_{i+1}$ is a descent in $\tt(e)$, and suppose the tails of the $i$-th and $(i+1)$-th runs, and the largest entry in the $(i+1)$-th run in the original sequence $e$, are $e_p$ ($=t_i$), $e_{q}$ ($=t_{i+1}$) and $e_{p+1}$ respectively. Then $e_pe_{p+1}e_q$ forms a $\underline{12}0$ pattern in $e$, which completes the proof of the lemma.
\end{proof}


Let $\PA$ denotes the set of all primitive ascent sequences. For each $e\in\PA$, define the weight of $e$ by $\wt(e) =\prod\limits_{i=1}^{\len(e)}\wt(e_i)$, where 
\begin{align*}
\wt(e_i)&:=\begin{cases}
1 & \text{if $e_i$ is a tail,}\\
x & \text{otherwise.}
\end{cases}
\end{align*}
If $e$ has $k$ ascents, then it has exactly $k+1$ tails hence $\wt(e)=x^{\len(e)-k-1}$.
Therefore, Eq.~\eqref{aztec} can be rewritten as
\begin{equation}\label{sum over PA}
 \sum_{{e\in\PA(\underline{12}0)}\atop{\asc(e)=k}}\wt(e)=(1+x)^{{k+1\choose 2}},
\end{equation}
which is equivalent to Conjecture~\ref{asc:12-0}.
In order to prove~\eqref{sum over PA}, we introduce the refined enumerator 
$$
f_{k,i}(x):=\sum_{e\in\PA_{k,i}(\underline{12}0)}\wt(e), 
$$
where $\PA_{k,i}(\underline{12}0)$ is the set of all $e\in\PA(\underline{12}0)$ with $\asc(e)=k$ and the last entry of $e$ being $i$. We have the following recursion for $f_{k,i}(x)$.
\begin{lemma}For $k\geq0$ and $0\leq i\leq k+1$, we have the recursion
\begin{equation}\label{rec:12-0}
f_{k+1,i}(x)=\sum_{j=0}^{i}(1+x)^{k+1-i}f_{k,j}(x)-f_{k,i}(x)
\end{equation}
with the initial conditions $f_{0,0}(x)=1$, and $f_{k,i}(x)=0$ for $k<i$.
\end{lemma}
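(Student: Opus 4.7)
The plan is to prove the recursion by decomposing each $e\in\PA_{k+1,i}(\underline{12}0)$ according to its final maximal decreasing run. Write $e=e'\cdot R$ where $R=(a_1,a_2,\ldots,a_m)$ with $a_1>a_2>\cdots>a_m=i$ is the last run of $e$, and $e'$ is the preceding prefix. Removing this run deletes exactly the last entry from the tail sequence of $e$, so by Lemma~\ref{char:12-0} the prefix $e'$ remains $\underline{12}0$-avoiding and primitive, with $\asc(e')=k$. Writing $j$ for the last letter of $e'$, we have $e'\in\PA_{k,j}(\underline{12}0)$, and the non-decreasing tail condition applied to $e$ forces $i\ge j$.

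Next I would catalog admissible last runs $R$. The requirement that $R$ produces a new ascent gives $a_1>j$, while the ascent sequence condition on $a_1$ yields $a_1\le\asc(e')+1=k+1$; the entries $a_2,\ldots,a_m$ are automatically legal since they lie below $a_1\le k+1$, and primitivity is immediate from strict decrease. If $m=1$, then $R=(i)$, and the ascent requirement becomes the strict inequality $i>j$; this contributes weight $1$. If $m\ge 2$, then $\{a_1,\ldots,a_{m-1}\}$ may be any nonempty subset of $\{i+1,i+2,\ldots,k+1\}$ placed in decreasing order; such a run contributes weight $x^{m-1}$, and the case is always permitted under $i\ge j$. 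Summing over $m\ge 2$ gives $\sum_{\ell=1}^{k+1-i}\binom{k+1-i}{\ell}x^\ell=(1+x)^{k+1-i}-1$, so the total weight contribution of the new run is $(1+x)^{k+1-i}$ when $j<i$ and $(1+x)^{k+1-i}-1$ when $j=i$.

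Finally, summing over all admissible $j\in\{0,1,\ldots,i\}$ yields
\begin{equation*}
f_{k+1,i}(x)=\sum_{j=0}^{i-1}(1+x)^{k+1-i}f_{k,j}(x)+\bigl((1+x)^{k+1-i}-1\bigr)f_{k,i}(x),
\end{equation*}
which rearranges to the stated recursion~\eqref{rec:12-0}. The initial conditions are transparent: $\PA_{0,0}(\underline{12}0)=\{(0)\}$ forces $f_{0,0}(x)=1$, while $f_{k,i}(x)=0$ for $i>k$ follows from the elementary observation that the final letter of a primitive ascent sequence with $k$ ascents is at most $k$ (check the two cases according to whether the last step is an ascent or a descent, and use the ascent sequence constraint).

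The main subtlety I expect is the boundary case $j=i$, where the singleton-run extension $R=(i)$ must be excluded because it fails to create a new ascent; this missing contribution accounts precisely for the correction term $-f_{k,i}(x)$ in the recursion. Apart from that, the argument is essentially bookkeeping enabled by the clean tail-sequence characterization of Lemma~\ref{char:12-0}.
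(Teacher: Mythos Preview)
Your proposal is correct and follows essentially the same approach as the paper: both remove the last maximal decreasing run, identify the prefix as an element of $\PA_{k,j}(\underline{12}0)$ with $j\le i$ via Lemma~\ref{char:12-0}, and count the admissible final runs as subsets of $[i+1,k+1]$, with the correction $-f_{k,i}(x)$ coming from the forbidden singleton run when $j=i$. The only cosmetic difference is that the paper phrases the exceptional case as ``the subset must be nonempty when $j=i$'' (primitivity) whereas you phrase it as ``no new ascent is created''; these are the same constraint.
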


\begin{proof}
By the characterization in Lemma~\ref{char:12-0}, each ascent sequence $e\in\PA_{k+1,i}(\underline{12}0)$ with tail sequence $\tt(e)=t_1t_2\cdots t_{k+2}$ and the penultimate tail being $e_p=j$ ($0\le j\le i$) is  decomposed into
\begin{itemize}
\item the prefix $e_1e_2\ldots e_p\in\PA_{k,j}(\underline{12}0)$,
\item the entries $e_{p+1}>e_{p+2}>\ldots>e_{\len(e)-1}$ forming a subset of the interval $[i+1,k+1]$ with the restriction that such a subset must be non-empty whenever $j=i$ (since $e$ is primitive), and
\item the last entry $e_{\len(e)}=i$.
\end{itemize}
 Now if we take the weight into consideration, recursion~\eqref{rec:12-0} follows from the decomposition above immediately.
\end{proof}

We are now ready to prove the following expression for $f_{k,i}(x)$.

\begin{theorem}\label{gen:conj1}
For $0\le i\le k$,  we have
\begin{align}\label{f_ki}
f_{k,i}(x)&=(1+x)^{\binom{i}{2}}\prod_{\ell=i+1}^{k}\left((1+x)^{\ell}-1\right).
\end{align}
\end{theorem}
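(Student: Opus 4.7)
The plan is an induction on $k$, exploiting the recursion~\eqref{rec:12-0}. Write $y := 1+x$ for convenience. The base case $k=0$ reduces to $f_{0,0}(x) = 1$, matching the empty product in~\eqref{f_ki}.

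For the inductive step I first isolate the only nontrivial quantity appearing on the right-hand side of the recursion, namely the partial sum $S_{k,i}(x) := \sum_{j=0}^{i} f_{k,j}(x)$. The heart of the argument is the auxiliary identity
$$S_{k,i}(x) \;=\; y^{\binom{i+1}{2}}\prod_{\ell=i+1}^{k}(y^{\ell}-1), \qquad 0\le i\le k,$$
which I would establish by an inner induction on $i$, using the outer inductive hypothesis for $f_{k,j}$. The base case $i=0$ reads off the formula for $f_{k,0}$. For the step, combine $S_{k,i-1}(x) = y^{\binom{i}{2}}\prod_{\ell=i}^{k}(y^\ell-1)$ with $f_{k,i}(x) = y^{\binom{i}{2}}\prod_{\ell=i+1}^{k}(y^\ell-1)$ by pulling out the common factor $y^{\binom{i}{2}}\prod_{\ell=i+1}^{k}(y^\ell-1)$ and using $(y^i-1)+1 = y^i$ together with $\binom{i}{2}+i = \binom{i+1}{2}$.

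With the partial-sum identity available, the main induction becomes a short calculation. For $0\le i\le k$, substitute $S_{k,i}(x)$ and the known $f_{k,i}(x)$ into~\eqref{rec:12-0}. The exponents in the first term combine as $\binom{i+1}{2}+(k+1-i) = \binom{i}{2}+k+1$, so after factoring out $y^{\binom{i}{2}}\prod_{\ell=i+1}^{k}(y^\ell-1)$ the bracket becomes exactly $y^{k+1}-1$, extending the product up to $\ell = k+1$ and producing the claimed formula for $f_{k+1,i}(x)$. The boundary case $i=k+1$ is handled directly: since $f_{k,k+1}(x)=0$, the recursion collapses to $f_{k+1,k+1}(x) = S_{k,k}(x) = y^{\binom{k+1}{2}}$, matching the right-hand side of~\eqref{f_ki} with an empty product.

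The main potential obstacle is guessing the correct form of $S_{k,i}(x)$. However, inspecting the recursion shows that the desired telescoping \emph{forces} $S_{k,i}(x)$ to differ from $f_{k,i}(x)$ only by replacing $\binom{i}{2}$ with $\binom{i+1}{2}$ and dropping the factor $y^{i}-1$; small cases ($k\le 2$) confirm the pattern immediately, so the guess is essentially mechanical and the rest reduces to bookkeeping with exponents.
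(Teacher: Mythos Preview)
Your proof is correct and follows essentially the same approach as the paper: induction on $k$ via recursion~\eqref{rec:12-0}, with the key step being the closed form for the partial sum $S_{k,i}(x)=\sum_{j=0}^{i}f_{k,j}(x)$. The only organizational difference is that you obtain $S_{k,i}(x)$ by a clean inner induction on $i$, whereas the paper factors out $\prod_{\ell=i+1}^{k}((1+x)^\ell-1)$ and recognizes the remaining sum as $S_{i,i}(x)=f_{i+1,i+1}(x)$, which it pulls from the (strong) inductive hypothesis after first treating the boundary case $i=k+1$.
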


Conjecture~\ref{asc:12-0} is an immediate consequence of Theorem~\ref{gen:conj1}.
\begin{proof}[{\bf Proof of Conjecture~\ref{asc:12-0}}]
It follows from recursion~\eqref{rec:12-0} and formula~\eqref{f_ki} that
$$
\sum_{i=0}^k f_{k,i}(x)=f_{k+1,k+1}(x)=(1+x)^{\binom{k+1}{2}},
$$
which establishes~\eqref{sum over PA} and thus Conjecture~\ref{asc:12-0} is true.
\end{proof}

We are going to prove Theorem~\ref{gen:conj1} by induction based on recursion~\eqref{rec:12-0}.

\begin{proof}[{\bf Proof of Theorem~\ref{gen:conj1}}]
We will prove the result by induction on $k$.
 The first few values 
$$f_{0,0}(x)=1, \; f_{1,0}(x)=x \text{ and } f_{1,1}(x)=1$$ can be readily checked. Suppose that~\eqref{f_ki}  holds for all $k\le m$ and $0\le i\le k$, for certain integer $m\ge 1$. We compute the case with $k=m+1$.

By recursion~\eqref{rec:12-0}, we have
\begin{align*}
f_{m+1,m+1}(x)=\sum_{i=0}^{m}f_{m,i}(x)&=\sum_{i=0}^{m}(1+x)^{\binom{i}{2}}\prod_{\ell=i+1}^{m}\left((1+x)^{\ell}-1\right)\\
&=(1+x)^{\binom{m}{2}}+\left((1+x)^{m}-1\right)\sum_{i=0}^{m-1}(1+x)^{\binom{i}{2}}\prod_{\ell=i+1}^{m-1}\left((1+x)^{\ell}-1\right)\\
&=(1+x)^{\binom{m}{2}}+\left((1+x)^{m}-1\right)\sum_{i=0}^{m-1}f_{m-1,i}(x)\\
&=(1+x)^{\binom{m}{2}}+\left((1+x)^{m}-1\right)(1+x)^{\binom{m}{2}}\\
&=(1+x)^{\binom{m+1}{2}}.
\end{align*}
For $0\leq i\leq m$, it follows from 
$$\sum_{i=0}^{k}f_{k,i}(x)=f_{k+1,k+1}(x)=(1+x)^{\binom{k+1}{2}}\quad\text{($1\leq k\leq m$)}
$$ 
 and  recursion~\eqref{rec:12-0} that
\begin{align*}
f_{m+1,i}(x)&=\sum_{j=0}^{i}(1+x)^{m+1-i}f_{m,j}(x)-f_{m,i}(x)\\
&=(1+x)^{m+1-i}\sum_{j=0}^{i}(1+x)^{\binom{j}{2}}\prod_{\ell=j+1}^{m}\left((1+x)^{\ell}-1\right)-(1+x)^{\binom{i}{2}}\prod_{\ell=i+1}^{m}\left((1+x)^{\ell}-1\right)\\
&=\left((1+x)^{m+1-\binom{i+1}{2}}\sum_{\ell=0}^{i}f_{i,\ell}(x)-1\right)(1+x)^{\binom{i}{2}}\prod_{\ell=i+1}^{m}\left((1+x)^{\ell}-1\right)\\
&=\left((1+x)^{m+1}-1\right)(1+x)^{\binom{i}{2}}\prod_{\ell=i+1}^{m}\left((1+x)^{\ell}-1\right)\\
&=(1+x)^{\binom{i}{2}}\prod_{\ell=i+1}^{m+1}\left((1+x)^{\ell}-1\right).
\end{align*}
Thus, we have  verified  the case with $k=m+1$ for~\eqref{f_ki}. The proof is now completed by induction.
\end{proof}

\section{On $\underline{12}0$-avoiding inversion  sequences}
\label{sec:3}

\subsection{Proof of Conjecture~\ref{inv:12-0}}
In this subsection, we develop a delicate algorithm to construct recursively $\underline{12}0$-avoiding inversion  sequences, which leads to a proof of Conjecture~\ref{inv:12-0}. 

The following operations are quite standard (cf. \cite{cor}) for constructing new inversion sequences from old ones.
For an inversion sequence $e=e_1e_2\ldots e_n\in\I_n$ and any integer $t$, let $$\sigma_t(e):=e_1'e_2'\ldots e_n', \text{ where }e_i'=\begin{cases}0 & \text{if $e_i=0$},\\
e_i+t & \text{otherwise}.\end{cases}$$
Note that the image $\sigma_t(e)$ is not necessarily an inversion sequence. And sometimes we need to apply $\sigma_t$ on substrings of an inversion sequence.
We use concatenation to add an entry to the beginning or the end of an inversion sequence: $0\cdot e$ is the inversion sequence $0e_1e_2\ldots e_n$ and for $0\le i\le n$, $e\cdot i$ is the inversion sequence $e_1e_2\ldots e_n i$. For any sequence $s$, not necessarily an inversion sequence, we use $\min(s)$ to denote the value of the smallest entry in $s$.

Quite recently, Beaton, Bouvel, Guerrini and Rinaldi~\cite[Prop.~19]{bbgr} rephrased \eqref{recurrence} in terms of the following succession rule, and reproved the statement of Corteel et al. for $\I_n(110)$ by explaining their growth subjected to this rule. 
\begin{align}\label{rule}
\Omega_{\mathrm{pCat}}=\begin{cases}
(1) \\
(k) \leadsto (1), (2)^2, (3)^3,\ldots,(k)^k,(k+1).
\end{cases}
\end{align}
Here $(i)^i$ means $i$ copies of $(i)$.
The {\em powered Catalan generating tree} (actually an infinite rooted tree) can be constructed from $\Omega_{\mathrm{pCat}}$ like this: the root is $(1)$ and the children of a vertex labelled $(k)$ are those generated according to the rule $\Omega_{\mathrm{pCat}}$. Note that the number of vertices  at level $n$ that carry the label $(k)$  in the powered Catalan generating tree is precisely the quantity $c_{n,k}$.  

Our strategy to prove Conjecture~\ref{inv:12-0} is to show that the family $\{\I_n(\underline{12}0)\}_{n\ge 1}$ also obeys the succession rule $\Omega_{\mathrm{pCat}}$. We remark that the first step is the same as given in \cite{bbgr}, while the second step involving ``Algorithm BS'' is substantially different and crucial in dealing with $\underline{12}0$-avoiding, rather than $110$-avoiding inversion sequences.

\begin{proof}[{\bf Proof of Conjecture~\ref{inv:12-0}}]
For $1\leq k\leq n$, let $\I_{n,k}(\underline{12}0):=\{e\in\I_n(\underline{12}0):\zero(e)=k\}$.
Let $e=e_1\ldots e_n\in\I_{n,k}(\underline{12}0)$ and suppose its $k$ zero entries are indexed as $e_{i_1}(=e_1),e_{i_2},\ldots,e_{i_k}$. Since $e$ is $\underline{12}0$-avoiding, it uniquely decomposes as $$e=0W_10W_20\ldots0W_{k-1}0W_k,$$ where for $1\le j\le k-1$, $W_j$ is a non-increasing, zero-free substring of length $i_{j+1}-i_j-1$, and $W_k$ is a $\underline{12}0$-avoiding, zero-free substring of length $n-i_k$.
\begin{description}
	\item[Step I] Set $e'=0\cdot\sigma_1(e)=0e_1'\ldots e_n'=00\sigma_1(W_1)0\sigma_1(W_2)0\ldots 0\sigma_1(W_{k-1})0\sigma_1(W_k).$
	\item[Step II] Transform $e'$ into one or more $\underline{12}0$-avoiding inversion sequences, according to the following three succession cases.
	\begin{description}
		\item[$\mathbf{(k+1)}$] Set $e^{(k+1)}=e'$.
		\item[$\mathbf{(1)}$] Replace each of $e_{i_1}',e_{i_2}',\ldots,e_{i_k}'$ by $1$, and denote the new sequence by $e^{(1)}$.
		\item[$\mathbf{(j)}$] For any $2\le j\le k$, replace each of $e_{i_{j+1}}',e_{i_{j+2}}',\ldots,e_{i_k}'$ by $1$. Choose one integer $1\le m\le j$, then go on to replace the zero $e_{i_m}'$ by $1$, and denote this new sequence by $e''$. Apply the following Algorithm BS on $e''$. The output sequence is denoted as $e^{(j,m)}$.
	\end{description}
\end{description}
\begin{framed}
\begin{center}
 {\bf Algorithm BS} (backward shift)
\end{center}
\noindent Input sequence $$e=00U_10\ldots 0U_{m-1}1U_{m}0\ldots0U_j1U_{j+1}1\ldots1U_k,$$ where the substrings $U_1,\ldots,U_k$ contain neither $0$ nor $1$, and the $1$ between $U_{m-1}$ and $U_m$ is the only $1$ to the left of $U_j$.

If $m=j$ or $U_m=\emptyset$, output $e$ as is. Note that in this case $e$ contains at most one $1$ between the $0$s.

Otherwise, initiate $R=U_m$ and we go through the following steps to locally transform certain substring of $e$. 
\begin{itemize}
	\item[Step 1] Find the substring $L\delta R$, where $\delta=0$ or $1$, and $L$ is the maximal zero-free substring extended to the left of $\delta$.
	\item[Step 2] Transform $L\delta R\rightarrow L\cdot\sigma_{-1}(R)\cdot\delta$. 
    \item[Step 3] If $L\neq\emptyset$, put $R=L$ and go back to Step 1.
 
    Else if $\min(\sigma_{-1}(R))=1$, terminates.

    \quad Else put $R=\sigma_{-1}(R)$ and go back to Step 1.
\end{itemize}
\noindent Output the final sequence.

\begin{example} Take $e=011100630870020\in\I_{15,7}(\underline{12}0)$ with $j=5$ and $m=3$ for example. Applying the succession rules $\mathbf{(j)}$ and the algorithm BS gives
\begin{align*}
e'=0022200740980030\rightarrow e''&=0022201\red{74}0980131\rightarrow 002220\red{63}10980131\\
&\quad\rightarrow 00\red{222}52010980131\rightarrow 0111052010980131=e^{(5,3)},
\end{align*}
where all the involved substrings $R$ are colored in red.
\end{example}
\end{framed}
For well-definedness, one checks that following the succession rules $\mathbf{(k+1)}$, $\mathbf{(1)}$ and $\mathbf{(j)}$, we end up respectively, with one sequence $e^{(k+1)}\in\I_{n+1,k+1}(\underline{12}0)$, one sequence $e^{(1)}\in\I_{n+1,1}(\underline{12}0)$, and $j$ sequences $e^{(j,m)}\in\I_{n+1,j}(\underline{12}0)$ for $1\le m\le j$ and $2\le j\le k$.

To complete the proof, we have to show that if we apply the above process for each sequence in $\I_{n}(\underline{12}0)$, we generate every sequence in $\I_{n+1}(\underline{12}0)$ precisely once. The first thing to notice is that $e^{(k+1)}$ contains no $1$s, $e^{(1)}$ has only one $0$, and $e^{(m)}$ has at least two $0$s and at least one $1$. So these three cases are mutually exclusive. It should be clear how to invert $e^{(k+1)}$ or $e^{(1)}$ to recover $e$, so it suffices to invert $e^{(j,m)}$. This is done by first applying the forward shift algorithm below to $e^{(j,m)}$, which outputs the sequence $e''$; then obtaining  $e'$ from $e''$ by replacing all $1$s by $0$s; and finally  deriving $e$ from $\sigma_{-1}(e')=0\cdot e$.
\begin{framed}
\begin{center}
 {\bf Algorithm FS} (forward shift)
\end{center}
\noindent Input sequence $e$, which has at least two $0$s and at least one $1$. We call the substring inbetween the leftmost $0$ and the rightmost $0$ the {\em zero zone} of $e$.

If $e$ has less than two $1$s in the zero zone, output $e$ as is.

Otherwise we can write $$e=0\ldots0V_i 10V_{i+1}0\ldots 0V_j10\ldots0V_{l},$$ where the substrings $V_1,\ldots,V_{j},V_{j+1},\ldots,V_{l-1}$ are zero-free and non-increasing, $V_{l}$ is zero-free and $\underline{12}0$-avoiding, and  $V_i1$  (resp.~$V_j1$) contains the leftmost (resp. rightmost) $1$ in the zero zone. Now initiate $L=V_i1$ and we go through the following steps to locally transform certain substring of $e$. 
\begin{itemize}
	\item[Step 1] Find the substring $L 0 R$, where $R$ is the maximal zero-free substring extended to the right of $0$.
	\item[Step 2] Transform $L 0 R\rightarrow 0\cdot\sigma_{1}(L)\cdot R$. 
    \item[Step 3] If $R=R'1$ ends with the rightmost $1$ in the zero zone, continue.

    If $R'=\emptyset$, transform $\sigma_1(L)10\rightarrow 1\cdot\sigma_{2}(L)\cdot0$ and terminates.

    Else transform $R'10\rightarrow 1\cdot\sigma_{1}(R')\cdot0$ and terminates.

    \noindent Else if $R=\emptyset$, put $L=\sigma_1(L)$ and go back to Step 1.

    \noindent Else put $L=R$ and go back to Step 1.
\end{itemize}
\noindent Output the final sequence.

\begin{example} Take $e=0111052010980131\in\I_{16,5}(\underline{12}0)$  for example. Applying  the algorithm FS gives
\begin{align*}
e\rightarrow 0022252010980131\rightarrow 0022206310980131\rightarrow0022201740980131=e''.
\end{align*}
\end{example}
\end{framed}

In conclusion, we have proved that $\underline{12}0$-avoiding inversion sequences grow according to  the rule $\Omega_{\mathrm{pCat}}$ if every sequence with $k$ zeros is represented by $(k)$. This completes the proof of the conjecture. 
\end{proof}

\begin{example}\label{eg-algorithm}
In this example, we find all $1+1+2+3+4=11$ images of an inversion sequence $e\in\I_{12,4}(\underline{12}0)$, following the steps described in the proof above. 
\begin{align*}
e=010211002565\quad &\leadsto \quad e^{(4+1)}=0020322003676. \\
e=010211002565\quad &\leadsto \quad e^{(1)}=0121322113676. \\
e=010211002565\quad &\leadsto \quad \begin{cases}e^{(2,2)}=0021322113676, \\ e^{(2,1)}=0110322113676.\end{cases}\\
e=010211002565\quad &\leadsto \quad \begin{cases}e^{(3,3)}=0020322113676, \\ e^{(3,2)}=0102111013676, \\ e^{(3,1)}=0110322013676. \\ \end{cases}\\
e=010211002565\quad &\leadsto \quad \begin{cases}e^{(4,4)}=0020322013676, \\ e^{(4,3)}=0020322103676, \\ e^{(4,2)}=0102111003676, \\ e^{(4,1)}=0110322003676. \end{cases}
\end{align*}
\end{example}

\subsection{The last entry statistic} In this subsection, we study the last entry statistic of inversion sequences and obtain a new succession rule for powered Catalan numbers. For an inversion sequence $e\in\I_n$, let $\last(e)=e_n$ be the {\em last entry} of $e$. The last entry statistic has been found to be useful in solving  two enumeration conjectures in~\cite{kl2}. By comparing the construction of the rule $\Omega_{\mathrm{pCat}}$ for $\underline{12}0$-avoiding inversion sequences in the proof of Conj.~\ref{inv:12-0} and that for $110$-avoiding inversion sequences in the proof of~\cite[Prop.~19]{bbgr}, we have the following equidistribution. 
\begin{proposition}
The tripe $(\last,\zero,\rmin)$ has the same distribution over $\I_n(\underline{12}0)$ and $\I_n(110)$, where $\rmin(e)$ is the number of right-to-left minima of an inversion sequence $e$. 
\end{proposition}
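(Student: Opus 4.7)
The plan is to refine the succession rule $\Omega_{\mathrm{pCat}}$, which governs both $\I_n(\underline{12}0)$ and $\I_n(110)$, to a rule that records the full triple $(\last,\zero,\rmin)$, and to verify that the two refinements coincide. The proof of Conjecture~\ref{inv:12-0} above and that of~\cite[Prop.~19]{bbgr} both produce the $1+k(k+1)/2$ children of a parent $e$ with $\zero(e)=k$ via an identical Step~I ($e'=0\cdot\sigma_1(e)$) and three parallel succession cases $\mathbf{(k+1)}$, $\mathbf{(1)}$ and $\mathbf{(j,m)}$. It therefore suffices to show that, for each of these labelled cases, the triple on the child depends only on the triple of the parent (together with the label $(j,m)$) in the same way on both sides.

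For the $\underline{12}0$-avoiding construction I would proceed case by case. Step~I has a transparent effect on $(\last,\zero,\rmin)$: $\zero$ increases by one, $\last$ becomes $\last(e)+1$ if $\last(e)>0$ and stays $0$ otherwise, and $\rmin$ is easily read off from that of $e$ together with $\last(e)$. Cases $\mathbf{(k+1)}$ and $\mathbf{(1)}$ only modify selected zeros by $0\mapsto 0$ or $0\mapsto 1$, so their effect on the triple is immediate. The delicate case is $\mathbf{(j,m)}$: here Algorithm~BS performs local swaps $L\delta R\mapsto L\cdot\sigma_{-1}(R)\cdot\delta$ with $L,R$ zero-free (so of values $\ge 2$) and $\delta\in\{0,1\}$. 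Two observations make the triple controllable: (i) BS never touches any entry lying to the right of the rightmost zero of $e'$, so $\last$ of the output is determined purely by the zero-to-one replacements performed in passing from $e'$ to $e''$; (ii) each local swap shifts a zero-free block leftward by one position while decrementing every entry by one, so the contribution of the affected window to the right-to-left minima is translated but unchanged in cardinality.

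The same bookkeeping, with the local rearrangement in the proof of~\cite[Prop.~19]{bbgr} in place of Algorithm~BS, yields the children triples in the $110$-avoiding construction. A direct comparison of the two lists shows they agree for every choice of $(j,m)$, which identifies the two refined generating trees and establishes the proposition. The main obstacle is carrying out observation (ii) carefully through an entire run of BS, since the algorithm may iterate many times before termination; one needs an inductive argument on the number of iterations to confirm that the cumulative effect preserves the number of right-to-left minima contributed by the modified region, and then to check that this count matches the one produced by the parallel procedure in~\cite{bbgr}.
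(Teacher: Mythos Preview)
Your plan is exactly the one the paper itself intends: the paper offers no detailed argument, only the sentence that the equidistribution follows ``by comparing the construction of the rule $\Omega_{\mathrm{pCat}}$'' for the two families. So you are on the right track, and your outline of tracking the triple through the parallel growths is the natural way to flesh out that sentence.

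One simplification is worth noting. Your observation~(ii), that each swap $L\delta R\to L\cdot\sigma_{-1}(R)\cdot\delta$ ``preserves the number of right-to-left minima contributed by the modified region'', is harder to justify than necessary (and literally false in the sense that BS can create new $1$'s inside the window). The cleaner fact is this: for any inversion sequence $s$ one has
\[
\rmin(s)=\zero(s)+\rmin\bigl(\text{suffix of $s$ after its last zero}\bigr),
\]
because every zero is a weak right-to-left minimum and no positive entry with a zero to its right can be one. Now Algorithm~BS only touches positions up to the end of the initial block $U_m$, which lies strictly to the left of the last zero of $e''$ whenever BS actually runs (i.e.\ $m<j$). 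Hence BS alters neither $\zero$ nor the suffix after the last zero, so $\rmin(e^{(j,m)})$ is already determined by $e''$ before BS is applied---and therefore by $e$, $j$, and $m$ alone. A short computation then gives
\[
\rmin(e^{(j,m)})=\begin{cases}\rmin(e)&\text{if }m<j,\\ \rmin(e)+1&\text{if }m=j,\end{cases}
\qquad \rmin(e^{(1)})=\rmin(e^{(k+1)})=\rmin(e)+1,
\]
and the analogous (easier) computation for the construction in~\cite[Prop.~19]{bbgr} yields the same list. This replaces your proposed induction on the number of BS iterations by a single structural remark, and removes the ``main obstacle'' you flagged.
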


Lin and Yan~\cite[Lem.~2.19]{ly} showed that Baril and Vajnovszki's $b$-code~\cite{bv} restricts  to  a bijection between $\S_n(3\underline{21}4)$ and $\I_n(\underline{12}0)$. 
 For a permutation $\pi=\pi_1\pi_2\cdots\pi_n\in\S_n$, define the encoding $\Theta$ by
$$
\Theta(\pi)=(e_1,e_2,\ldots,e_n),\quad\text{where $e_i:=\left|\{j: \text{$j<i$ and $\pi_j>\pi_i$}\}\right|$}.
$$
The encoding $\Theta$,  known as {\em invcode} of permutations, is a variation of the famous Lehmer code. 
One interesting feature of $\Theta$ that the $b$-code does not possess is
$\last(\pi)=\last(\Theta(\pi))$,
where $\last(\pi):=n-\pi_n$.

\begin{proposition}
The invcode $\Theta$ restricts to a bijection between $\S_n(3\underline{21}4)$ and $\I_n(\underline{12}0)$. 
Consequently, the triple  $(\last,\zero,\rmin)$ over $\I_n(\underline{12}0)$ (or  $\I_n(110)$) is equidistributed with $(\last,\lmax,\rmax)$ over $\S_n(3\underline{21}4)$, where $\lmax(\pi)$ (resp.~$\rmax(\pi)$) denotes  the number of left-to-right maxima (resp.~right-to-left maxima) of a permutation $\pi$.
\end{proposition}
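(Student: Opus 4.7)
The plan is to leverage the fact that $\Theta:\S_n\to\I_n$ is a classical bijection (the inversion table of a permutation), so the first task reduces to showing that $\Theta(\pi)\in\I_n(\underline{12}0)$ if and only if $\pi\in\S_n(3\underline{21}4)$. The engine is an \emph{ascent--descent dictionary} for $\Theta$: writing $e=\Theta(\pi)$, one reads off from the definition $e_i=|\{k<i:\pi_k>\pi_i\}|$ that $e_{i-1}<e_i$ iff $\pi_{i-1}>\pi_i$. Indeed, if $\pi_{i-1}<\pi_i$ then every $k<i-1$ with $\pi_k>\pi_i$ also has $\pi_k>\pi_{i-1}$, so $e_i\le e_{i-1}$; conversely, if $\pi_{i-1}>\pi_i$ then the index $k=i-1$ is newly counted in $e_i$, forcing $e_i\ge e_{i-1}+1$.

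For the direction ``$e$ contains $\underline{12}0\Rightarrow\pi$ contains $3\underline{21}4$'': suppose $e_j<e_{i-1}<e_i$ for some $i-1<i<j$. The dictionary gives $\pi_{i-1}>\pi_i$. A short calculation rules out $\pi_j<\pi_{i-1}$ (otherwise each element counted in $e_{i-1}$ is also counted in $e_j$, and $k=i-1$ is additionally counted, forcing $e_j\ge e_{i-1}+1$). Hence $\pi_j>\pi_{i-1}$, and splitting the count yields
\[
e_{i-1}-e_j=\bigl|\{k<i-1:\pi_{i-1}<\pi_k<\pi_j\}\bigr|-\bigl|\{i-1\le k<j:\pi_k>\pi_j\}\bigr|>0,
\]
so there is $a<i-1$ with $\pi_{i-1}<\pi_a<\pi_j$, exhibiting a $3\underline{21}4$-pattern at $(a,i-1,i,j)$.

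The reverse direction is more delicate: given a $3\underline{21}4$-occurrence $(a,b,b+1,d)$ in $\pi$, the naive choice $j=d$ need not satisfy $e_d<e_b$. Instead I would take $j^\star>b+1$ to be the position of the suffix-maximum $M:=\max\{\pi_k:k>b+1\}$; the inequality $\pi_d\le M$ ensures $M>\pi_b$. Since no index past $b+1$ carries a value strictly above $M$, and both $\pi_b,\pi_{b+1}<M$, the formula $e_{j^\star}=|\{k<j^\star:\pi_k>M\}|$ collapses to $|\{k<b:\pi_k>M\}|$, whence
\[
e_b-e_{j^\star}=\bigl|\{k<b:\pi_b<\pi_k<M\}\bigr|\ge 1,
\]
because $a$ itself witnesses this set via $\pi_a<\pi_d\le M$. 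Combined with the ascent $e_b<e_{b+1}$, this makes $(b,b+1,j^\star)$ a $\underline{12}0$-occurrence in $e$.

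With the pattern bijection in place, the statistical claims fall into line by the same style of count-subset argument: (i) $\last(\pi)=n-\pi_n=|\{k<n:\pi_k>\pi_n\}|=e_n=\last(e)$; (ii) $e_i=0$ iff no $k<i$ has $\pi_k>\pi_i$, i.e., iff $\pi_i$ is a left-to-right maximum, so $\zero(e)=\lmax(\pi)$; and (iii) $\pi_j>\pi_k$ for all $k>j$ iff $e_j<e_k$ for all $k>j$ (the forward direction adds the index $j$ to each count $\{l<k:\pi_l>\pi_k\}$, the reverse uses the smallest $k>j$ with $\pi_k>\pi_j$ to show $e_k\le e_j$), so $\rmin(e)=\rmax(\pi)$. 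Hence $\Theta$ transports the triple $(\last,\lmax,\rmax)$ on $\S_n(3\underline{21}4)$ to $(\last,\zero,\rmin)$ on $\I_n(\underline{12}0)$, and the equidistribution with $\I_n(110)$ is then inherited from the preceding proposition. The main obstacle is the reverse pattern direction, where the naive choice $j=d$ fails and one must exploit the suffix-maximum index $j^\star$ to force $e_{j^\star}<e_b$.
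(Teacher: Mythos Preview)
Your argument is correct. The direction ``$e$ contains $\underline{12}0\Rightarrow\pi$ contains $3\underline{21}4$'' is essentially identical to the paper's, only spelled out in more detail. For the converse, both you and the paper recognize that the naive fourth index need not give $e_{j}<e_{b}$ and replace it by a carefully chosen one; however, the specific choices differ. The paper keeps the ``3'' at position $a$ and replaces $d$ by the \emph{smallest} index $k>b+1$ with $\pi_k>\pi_a$, so that every $\pi_\ell$ with $b+1<\ell<k$ lies below $\pi_a$; then the set $\{m<k:\pi_m>\pi_k\}$ has no contributions from $m\ge b$, and the index $a$ witnesses $e_k<e_b$. You instead take $j^\star$ to be the position of the global maximum of $\pi$ to the right of $b+1$, which forces the same collapse $e_{j^\star}=|\{k<b:\pi_k>M\}|$ and again uses $a$ as the witness. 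Both tricks work for the same reason: they make the interval $(b,j)$ contribute nothing to $e_j$. The paper's choice is slightly more local (it depends on the given ``3''), while yours is canonical in $b$ alone; neither has a real advantage here.

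You also supply proofs of the three statistic identities $\last\leftrightarrow\last$, $\zero\leftrightarrow\lmax$, $\rmin\leftrightarrow\rmax$, which the paper leaves to the reader. Your argument for $\rmin(e)=\rmax(\pi)$ via the smallest $k>j$ with $\pi_k>\pi_j$ is clean and correct.
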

\begin{proof}
Let $\pi\in\S_n$ and $e=\Theta(\pi)$. If $\pi$ contains the pattern $3\underline{21}4$, then there exists $1\leq i<j<k-1\leq n-1$ such that  $\pi_k>\pi_i>\pi_j>\pi_{j+1}$ and $\pi_{\ell}<\pi_i$ for each $j+1<\ell<k$. 
Thus, we have  $e_k<e_j<e_{j+1}$ and so $e_je_{j+1}e_k$ forms a $\underline{12}0$ pattern in $e$. Conversely, suppose that $1\leq i<j-1$ and $e_ie_{i+1}e_j$ is a $\underline{12}0$ pattern of $e$. Since $e_i>e_j$, we have $\pi_i<\pi_j$ and there exists $1\leq k<i$ such that $\pi_i<\pi_k<\pi_j$. Now $\pi_k\pi_i\pi_{i+1}\pi_j$ forms a pattern $3\underline{21}4$ in $\pi$. This completes the proof.
\end{proof}

In~\cite[Prop.~25]{bbgr}, Beaton, Bouvel, Guerrini and Rinaldi obtained another succession rule for the powered Catalan numbers, which is essentially different from $\Omega_{\mathrm{pCat}}$:
\begin{align*}
\Omega_{\mathrm{1\underline{23}4}}=\begin{cases}
(1,1) \\
(1,q) \leadsto (1,q+1), (2,q),\ldots,(1+q,1),\\
(p,q)\leadsto (1,p+q), (2,p+q-1),\ldots,(p,q+1),\\
\qquad\qquad (p+1,0),\dots,(p+q,0),\qquad\qquad\qquad\,\,\,\,\quad\text{if $p>1$}.
\end{cases}
\end{align*} 
The consideration of the last entry statistic on $\underline{12}0$-avoiding inversion sequences leads to a third succession rule for the powered Catalan numbers. 

For a sequence $e\in\I_n(\underline{12}0)$, let us introduce the parameters $(p,q)$ of $e$ by 
$$
p:=|\{k: (e_1,e_2,\ldots,e_n,k)\in\I_{n+1}(\underline{12}0)\text{ and } k>e_n\}|=n-e_n
$$
and 
$$
q:=|\{k: (e_1,e_2,\ldots,e_n,k)\in\I_{n+1}(\underline{12}0) \text{ and } k\leq e_n\}|.
$$

\begin{proposition}\label{succ:3}
The $\I_n(\underline{12}0)$-avoiding inversion sequences grow according to the following succession rule 
\begin{align*}
\Omega_{\mathrm{\underline{12}0}}=\begin{cases}
(1,1) \\
(p,q)\leadsto (p,2), (p-1,3),\ldots,(1,p+1),\\
\qquad\qquad (p+1,q),(p+2,q-1), \dots, (p+q,1).
\end{cases}
\end{align*}
\end{proposition}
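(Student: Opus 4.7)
The plan is to check directly that appending a letter provides a growth correspondence between $\I_n(\underline{12}0)$ and $\I_{n+1}(\underline{12}0)$, and to compute the label $(p',q')$ of each child in terms of the label $(p,q)$ of the parent. First I would observe that for $e \in \I_n(\underline{12}0)$, the sequence $e \cdot k$ lies in $\I_{n+1}(\underline{12}0)$ if and only if $0 \leq k \leq n$ and $k \geq M$, where
\[
M := \max\{e_{i-1} : 2 \leq i \leq n,\ e_{i-1} < e_i\}
\]
(with $M := 0$ if $e$ has no ascent), since $e$ already avoids $\underline{12}0$ and so any new occurrence of the pattern must end at the appended letter.

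The crux is the inequality $M \leq e_n$. Indeed, for any ascent $e_{i-1} < e_i$ with $i < n$, if we had $e_{i-1} > e_n$, then $e_{i-1}e_ie_n$ would be a $\underline{12}0$ pattern in $e$, a contradiction; and an ascent at position $i = n$ contributes $e_{n-1} < e_n$ directly. Hence the set of admissible values of $k$ is the interval $[M, n]$, and the two counts $n - e_n$ and $e_n - M + 1$ of admissible $k$ with $k > e_n$ and $k \leq e_n$ respectively agree with the definitions $p = n - e_n$ and $q$ given just before the proposition.

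It then remains to compute $(p', q')$ for each child $e \cdot k$. When $k > e_n$, the pair $(e_n, k)$ is a new ascent at position $n+1$, so $M_{\mathrm{new}} = \max(M, e_n) = e_n$, giving $(p', q') = (n+1-k,\ k - e_n + 1)$. As $k$ ranges over $\{e_n+1, \ldots, n\}$, these values list exactly $(p,2), (p-1,3), \ldots, (1, p+1)$. When $k \leq e_n$, no new ascent appears at position $n+1$, so $M_{\mathrm{new}} = M$ and $(p', q') = (n+1-k,\ k - M + 1)$; as $k$ ranges from $e_n$ down to $M$, these list $(p+1, q), (p+2, q-1), \ldots, (p+q, 1)$, using $p+q = n - M + 1$. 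The unique element $0 \in \I_1(\underline{12}0)$ carries label $(1,1)$, matching the axiom of $\Omega_{\underline{12}0}$, so together these computations prove that $\{\I_n(\underline{12}0)\}_{n\geq1}$ grows according to the stated rule. The only genuinely substantive step is the inequality $M \leq e_n$, where the $\underline{12}0$-avoidance of $e$ is used; the rest is bookkeeping.
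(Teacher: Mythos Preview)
Your proof is correct and follows essentially the same route as the paper's: both identify the admissible values of $k$ as the interval $[M,n]$ with $M$ the largest ascent bottom (the paper writes this as $e_n-q+1$), split into the cases $k>e_n$ and $k\le e_n$, and read off the child labels. Your write-up is in fact more careful than the paper's, since you explicitly justify the key inequality $M\le e_n$ from $\underline{12}0$-avoidance and verify the axiom $(1,1)$, both of which the paper leaves implicit.
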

\begin{proof}
Let $e$ be a sequence in $\I_n(\underline{12}0)$ with parameters $(p,q)$. It is clear that the sequence $s:=(e_1,e_2,\ldots,e_n,k)$ is in $\I_{n+1}(\underline{12}0)$ if and only if $n\geq k\geq e_n-q+1$, where $e_n-q+1$ equals the largest ascent bottom of $e$. We consider two cases:
\begin{itemize}
\item If $e_n<k\leq n$, then $e_n k$ forms an ascent of $f$ whose ascent bottom $e_n$ is obviously not smaller than $e_n-q+1$. So if we write $k=e_n+i$ for some $1\leq i\leq n-e_n=p$, then the parameters of $f$ are $(p-i+1,i+1)$. 
\item If $e_n-q+1\leq k\leq e_n$, then $k=e_n+1-i$ for some $1\leq i\leq q$. In this case, the parameters of $f$ are $(p+i,q+1-i)$. 
\end{itemize}
Summing over all the above two cases results in the succession rule $\Omega_{\mathrm{\underline{12}0}}$.
\end{proof}

\section{Two equidistribution conjectures}

The classification of Wilf equivalences for vincular patterns of length $3$ in inversion sequences has been completed, thanks to Auli and Elizalde's recent work\footnote{Auli and Elizalde independently initiated their work, we thank them for keeping us informed.}~\cite{auli3}.
Towards the complete classification of vincular patterns of length $4$ in permutations, 
 Baxter and Shattuck conjectured~\cite{bs} that $\S_n(\underline{23}14)$ has cardinality $p_n$, the $n$-th powered Catalan number. In their attempt to prove this conjecture, Beaton, Bouvel, Guerrini and Rinaldi~\cite[Conj.~23]{bbgr} found the following refinement.
\begin{conjecture}\label{23-1-4}
The number of permutations of $\S_n(\underline{23}14)$ with $k$ right-to-left minima is $c_{n,k}$. 
\end{conjecture}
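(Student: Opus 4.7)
The plan is to confirm Conjecture~\ref{23-1-4} by setting up a generating tree for $\S_n(\underline{23}14)$ based on the insertion of the new maximum, and then reducing the claim to the defining recursion~\eqref{recurrence} of $c_{n,k}$.

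Given $\pi\in\S_n(\underline{23}14)$, I consider the children obtained by inserting $n+1$ at some position $i\in[n+1]$. A routine pattern check shows that the resulting permutation lies in $\S_{n+1}(\underline{23}14)$ iff the prefix $\pi_1\cdots\pi_{i-1}$ avoids $\underline{23}1$, since $n+1$ can only play the role of the ``$4$'' in the vincular pattern. Moreover, inserting $n+1$ in a middle position ($i\leq n$) preserves $\rmin$, while inserting at the very end ($i=n+1$) increments $\rmin$ by one. Letting $t(\pi)$ denote the smallest index $j$ such that $\pi_1\cdots\pi_j$ contains a $\underline{23}1$-occurrence (with $t(\pi)=n+1$ if $\pi\in\S_n(\underline{23}1)\subseteq \S_n(\underline{23}14)$), the total number of children of $\pi$ equals $\min(t(\pi),n)+[t(\pi)=n+1]$. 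Summing over parents with $\rmin=k$ yields the identity
\[
c_{n+1,k}=A_{n,k-1}+n\cdot A_{n,k}+\!\!\!\!\!\sum_{\substack{\pi\in\S_n(\underline{23}14)\setminus\S_n(\underline{23}1)\\ \rmin(\pi)=k}}\!\!\!\!\!t(\pi),
\]
where $A_{n,k}:=|\{\pi\in\S_n(\underline{23}1):\rmin(\pi)=k\}|$.

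The task reduces to showing that this equals the right-hand side of~\eqref{recurrence}, namely $c_{n,k-1}+k\sum_{j\geq k}c_{n,j}$. The main obstacle is the last sum, a position-weighted enumeration of $\underline{23}14$-avoiding permutations that do contain at least one $\underline{23}1$-pattern; neither $A_{n,k}$ nor $\sum t(\pi)$ admits an obvious closed form in terms of $c_{n,k}$. A more promising route is to enrich the label to the pair $(\rmin(\pi),t(\pi))$ and attempt to match the resulting generating tree directly against the two-coordinate succession rule $\Omega_{\underline{12}0}$ established in Proposition~\ref{succ:3}, with the conjectured refinement then following by marginalization over the second coordinate. As a parallel strategy, I would also pursue a bijective attack: compose the reverse-complement $\pi\mapsto\pi^{rc}$, which carries $\S_n(\underline{23}14)$ onto $\S_n(14\underline{23})$ and swaps $\rmin$ with $\lmax$, with a code-theoretic map from $\S_n(14\underline{23})$ to $\I_n(\underline{12}0)$ preserving $\lmax\leftrightarrow\zero$ in the spirit of~\cite[Lem.~2.19]{ly}; Conjecture~\ref{inv:12-0} established in Section~\ref{sec:3} would then finish the proof.
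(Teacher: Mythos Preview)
There is no proof in the paper to compare against: Conjecture~\ref{23-1-4} is presented in Section~4 as an \emph{open problem} (attributed to Beaton, Bouvel, Guerrini and Rinaldi~\cite{bbgr}), and the paper neither proves it nor claims to. The paper only observes that it is equivalent to an equidistribution between $\zero$ on $\I_n(\underline{12}0)$ and $\rmin$ on $\S_n(\underline{23}14)$, and then poses two further refinements (Conjectures~\ref{conj:fu-lin} and~\ref{conj:fu-lin2}) as additional open problems.

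Your proposal is likewise not a proof but a strategy sketch, and you say so yourself. The generating-tree setup via insertion of $n{+}1$ is correct: the characterisation of admissible insertion slots by $\underline{23}1$-avoidance of the prefix is right (since $n{+}1$ can only serve as the ``$4$''), as is the effect on $\rmin$. But the resulting recursion involves the position-weighted sum $\sum t(\pi)$, which you correctly flag as the obstacle, and neither of your two fallback routes is carried out. The first---enriching the label to $(\rmin,t)$ and matching against $\Omega_{\underline{12}0}$---would require knowing how $t(\pi)$ evolves under insertion of $n{+}1$, and this is not determined by the pair $(\rmin,t)$ alone: the new value of $t$ after inserting at position $i\le t(\pi)$ depends on where the first $\underline{23}1$ occurrence involving the inserted $n{+}1$ appears, which in turn depends on the full prefix structure. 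The second route---composing reverse-complement with a code map $\S_n(14\underline{23})\to\I_n(\underline{12}0)$ sending $\lmax$ to $\zero$---is precisely the missing ingredient; the invcode $\Theta$ and the $b$-code of~\cite{bv,ly} handle $3\underline{21}4$, not $14\underline{23}$ or $\underline{23}14$, and no analogous map is known. In short, your outline identifies reasonable avenues but does not close the gap, and the paper offers nothing to benchmark against since the statement remains conjectural there.
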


Conjecture~\ref{23-1-4} is equivalent to the assertion that the statistic `$\zero$' over $\I_n(\underline{12}0)$ or $\I_n(110)$ has the same distribution as `$\rmin$' over $\S_n(\underline{23}14)$, where $\rmin(\pi)$ denotes the number of {\em right-to-left minima} of a permutation $\pi$. 
Using Maple program, we find the following refinement of Conjecture~\ref{23-1-4}. 

\begin{conjecture}\label{conj:fu-lin}
 The quadruple $(\rmin,\lmin,\rmax,\asc)$ on $\S_n(\underline{23}14)$ has the same distribution as $(\zero,\maxi,\rmin,\rep)$ on $\I_n(110)$. 
\end{conjecture}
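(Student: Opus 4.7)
The plan is to prove Conjecture~\ref{conj:fu-lin} by constructing an explicit bijection $\Phi\colon\S_n(\underline{23}14)\to \I_n(110)$ that carries the quadruple $(\rmin,\lmin,\rmax,\asc)$ to $(\zero,\maxi,\rmin,\rep)$, factored through $\I_n(\underline{12}0)$ as an intermediary. The invcode $\Theta$ of Section~\ref{sec:3} already gives a bijection between $\S_n(3\underline{21}4)$ and $\I_n(\underline{12}0)$, so a first step would be to precompose with a suitable permutation symmetry (reverse-complement, say) that converts $\underline{23}14$-avoiders into $3\underline{21}4$-avoiders while predictably transforming the target quadruple $(\rmin,\lmin,\rmax,\asc)$. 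Next, one would trace $\Theta$ carefully to identify which four statistics on $\I_n(\underline{12}0)$ correspond to the transformed quadruple on $\S_n(3\underline{21}4)$.

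The third step is to construct a bijection $\Psi\colon \I_n(\underline{12}0)\to \I_n(110)$ that identifies the image of those statistics with $(\zero,\maxi,\rmin,\rep)$. Since the succession rule $\Omega_{\mathrm{pCat}}$ governs both families---$\I_n(\underline{12}0)$ by the proof of Conjecture~\ref{inv:12-0} above and $\I_n(110)$ by~\cite[Prop.~19]{bbgr}---there is a canonical isomorphism of the two generating trees. To turn this into a statistic-preserving bijection, one refines $\Omega_{\mathrm{pCat}}$ to a multivariate succession rule that records, at each node, how each of the four statistics changes under the $k+1$ children of a node labelled $(k)$. If the updates on the two sides agree case by case, then $\Psi$ automatically preserves the refined labels, and composing with $\Theta$ and the initial symmetry yields the desired $\Phi$.

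The hard part will be the pair $(\asc,\rep)$. Both statistics are global: the ascent number on $\S_n(\underline{23}14)$ and the repeat number on $\I_n(110)$ each depend on subtle interactions between consecutive positions and are not cleanly determined by the integer label of a node in the generating tree. A similar but milder issue arises for $(\lmin,\maxi)$, where tracking the maximum value of an inversion sequence through the backward-shift algorithm of Section~\ref{sec:3} appears to require knowledge beyond just the number of zeros. I expect that a clean solution will demand enriching the label $(k)$ in $\Omega_{\mathrm{pCat}}$ by one or two auxiliary parameters---most naturally the last entry, in the spirit of Section~3.2, together with markers that remember the positions of the maximum entry and of the rightmost repeat---and then verifying a matrix of cases.

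As a sanity check and a likely intermediate goal, one should first verify the triple version obtained by dropping $(\asc,\rep)$, namely that $(\rmin,\lmin,\rmax)$ on $\S_n(\underline{23}14)$ matches $(\zero,\maxi,\rmin)$ on $\I_n(110)$; this is both weaker than Conjecture~\ref{conj:fu-lin} and stronger than the Baxter--Shattuck enumeration conjecture, and comparing its proof to the equidistribution of $(\last,\zero,\rmin)$ between $\I_n(\underline{12}0)$ and $\I_n(110)$ established in Section~\ref{sec:3} should reveal whether the approach above admits a direct bijective implementation or must proceed through the refined succession rule.
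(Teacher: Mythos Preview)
The paper does not prove this statement: Conjecture~\ref{conj:fu-lin} is presented as an open problem, supported only by computer verification for $1\le n\le 9$. So there is no ``paper's own proof'' to compare against; any successful argument would be a new theorem.

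Your proposal contains a genuine gap at the very first step. You write that one should ``precompose with a suitable permutation symmetry (reverse-complement, say) that converts $\underline{23}14$-avoiders into $3\underline{21}4$-avoiders.'' No such symmetry exists. The dihedral (reverse/complement) orbit of $3\underline{21}4$ is $\{3\underline{21}4,\,4\underline{12}3,\,2\underline{34}1,\,1\underline{43}2\}$, while the orbit of $\underline{23}14$ is $\{\underline{23}14,\,41\underline{32},\,\underline{32}41,\,14\underline{23}\}$; these are disjoint, so $\underline{23}14$ and $3\underline{21}4$ are not trivially Wilf-equivalent. Indeed, if a standard symmetry carried one class to the other, then the equality $|\S_n(\underline{23}14)|=p_n$ (the Baxter--Shattuck conjecture, restated here as Conjecture~\ref{23-1-4}) would follow immediately from the known result for $3\underline{21}4$, and the paper would not list it as open.

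Consequently, the rest of your outline---tracking statistics through $\Theta$ and then through the $\Omega_{\mathrm{pCat}}$ isomorphism between $\I_n(\underline{12}0)$ and $\I_n(110)$---never gets off the ground, because you have no map into $\S_n(3\underline{21}4)$ to begin with. Even your proposed ``sanity check,'' the triple $(\rmin,\lmin,\rmax)\leftrightarrow(\zero,\maxi,\rmin)$, already refines Conjecture~\ref{23-1-4} and is therefore itself open. A viable attack on Conjecture~\ref{conj:fu-lin} must confront the $\underline{23}14$ class directly, for instance by exhibiting a growth of $\S_n(\underline{23}14)$ according to $\Omega_{\mathrm{pCat}}$ (or $\Omega_{\underline{12}0}$) with the four statistics tracked; the paper explicitly flags this as the outstanding difficulty.
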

Here  we use $\lmin(\pi)$ (resp.~$\asc(\pi)$) to denote  the number of {\em left-to-right minima} (resp.~{\em ascents}) of a permutation $\pi$. And for an inversion sequence $e\in\I_n$, the two statistics  involved are
\begin{align*}
\maxi(e):=|\{i\in[n]: e_i=i-1\}| \quad\text{and}\quad\rep(e):=n-|\{e_1,e_2,\ldots,e_n\}|.
\end{align*}
Conjecture~\ref{conj:fu-lin} has been verified for $1\leq n\leq9$.

Finally,  the consideration of the last entry statistic  leads to another refinement of Baxter and Shattuck's enumeration conjecture. 
\begin{conjecture}\label{conj:fu-lin2}
The pair $(\last,\rmax)$ on $\S_n(\underline{23}14)$ has the same distribution as $(\last,\rmin)$ on $\I_n(\underline{12}0)$. 
\end{conjecture}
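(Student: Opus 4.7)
The approach I would pursue begins with a reduction via Section~\ref{sec:3}. By the invcode bijection $\Theta$ established there, the pair $(\last, \rmin)$ over $\I_n(\underline{12}0)$ is equidistributed with $(\last, \rmax)$ over $\S_n(3\underline{21}4)$. Consequently, Conjecture~\ref{conj:fu-lin2} is equivalent to showing that $(\last, \rmax)$ has the same distribution on $\S_n(\underline{23}14)$ and on $\S_n(3\underline{21}4)$. This recasts the problem as a purely permutation-level equidistribution, and its resolution would simultaneously settle the Baxter--Shattuck enumeration conjecture $|\S_n(\underline{23}14)| = p_n$, which is still open.

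The natural plan is to establish a common succession rule for both classes with labels refined to record the pair $(\last, \rmax)$. The rule $\Omega_{\underline{12}0}$ of Proposition~\ref{succ:3} already governs $\S_n(3\underline{21}4)$ via $\Theta$ with $p = n - \pi_n$, but its second coordinate $q$ does not literally equal $\rmax(\pi)$; so I would first refine $\Omega_{\underline{12}0}$ to jointly track $\last$ and $\rmax$. For $\S_n(\underline{23}14)$, I would study the growth obtained by standardizing the removal of the last letter: given $\pi^{*} \in \S_{n+1}(\underline{23}14)$ with $\pi^{*}_{n+1} = m$, the prefix standardizes to some $\pi \in \S_n(\underline{23}14)$, and a routine case analysis (since $m$ can only play the role of ``4'' in any new occurrence) shows that $m$ is valid precisely when $m \leq u_{\min}(\pi)$, where $u_{\min}(\pi)$ denotes the minimum value of $\pi_{i+1}$ over all occurrences of $\underline{23}1$ at consecutive positions $i, i+1$ in $\pi$ (and is taken to be $n+1$ if no such occurrence exists). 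One then needs to verify that this growth, labelled by $(\last, \rmax)$, reproduces the refinement of $\Omega_{\underline{12}0}$.

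The main obstacle I anticipate is precisely this joint tracking: the invariant $u_{\min}(\pi)$ is not a function of $(\last(\pi), \rmax(\pi))$ alone, and the class $\S_n(\underline{23}14)$ has notoriously elusive structure. A complete proof may therefore require a finer intermediate invariant---perhaps recording the entire right-to-left maximum subsequence, or the ``$\underline{23}1$-depth'' of each prefix---and then showing that the coarser labelling $(\last, \rmax)$ sums correctly to match the $\S_n(3\underline{21}4)$ distribution. As an alternative route, one might construct a direct combinatorial bijection $\Psi : \S_n(\underline{23}14) \to \I_n(\underline{12}0)$ sending $(\last, \rmax)$ to $(\last, \rmin)$, bypassing $\S_n(3\underline{21}4)$ altogether, perhaps by matching growth trees layer by layer using the algorithms of Section~\ref{sec:3}. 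Either path must ultimately disentangle the interplay between $\underline{23}1$-patterns in prefixes and the statistics $\last$ and $\rmax$, and this interplay is the heart of the difficulty.
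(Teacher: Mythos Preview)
The statement you are attempting is posed in the paper as an open \emph{conjecture}; the paper does not prove it. What the paper offers is computer verification for $1\le n\le 9$ together with the remark that, in view of Proposition~\ref{succ:3}, it would be interesting to show that $\underline{23}14$-avoiding permutations grow according to the rule $\Omega_{\underline{12}0}$. There is thus no proof to compare against.

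Your proposal is not a proof either, and to your credit you say so explicitly: you identify the reduction via $\Theta$ to a $(\last,\rmax)$-equidistribution between $\S_n(\underline{23}14)$ and $\S_n(3\underline{21}4)$, you sketch a growth-by-last-letter analysis on $\S_n(\underline{23}14)$, and then you name the real obstruction---that the active-site count $u_{\min}(\pi)$ is not determined by $(\last,\rmax)$ alone, so the succession rule does not close up on those two labels. That diagnosis is accurate and is precisely why the problem remains open; indeed it subsumes the Baxter--Shattuck conjecture $|\S_n(\underline{23}14)|=p_n$. Your suggested direction (matching growth trees via a refinement of $\Omega_{\underline{12}0}$) is exactly the one the paper flags as promising, but neither you nor the paper carries it through. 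So the honest summary is: correct reduction, correct identification of the difficulty, no proof---in agreement with the paper's own status for this statement.
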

Conjecture~\ref{conj:fu-lin2} has also been verified for $1\leq n\leq9$. In view of Proposition~
\ref{succ:3}, it would be interesting to show that $\underline{23}14$-avoiding permutations grow according to the rule $\Omega_{\mathrm{\underline{12}0}}$. One remarkable special case of Conjecture~\ref{conj:fu-lin2}  is that 
$$
|\{\pi\in\S_n(\underline{23}14): \pi_n=n\}|=B_{n-1}=|\{e\in\I_n(\underline{12}0): e_n=0\}|, 
$$
which follows from the enumeration results in~\cite{cla,ly}. 
Here $B_{n}$ is the $n$-th {\em Bell number}.

\section*{Acknowledgement}
This work was initiated in the summer of 2019, when both authors were visiting the Research Center for Mathematics and Interdisciplinary Sciences at Shandong University. They would like to thank the center for the excellent working condition and the hospitality extended during their stay. The first named author was supported by the National Science Foundation of China grant 11871247 and the project of Qilu Young Scholars of Shandong University.

\end{document}